\newcommand{\bburl}[1]{\textcolor{blue}{\url{#1}}}
\newcommand{\monthyear}[1]{%
  \def\@monthyear{\uppercase{#1}}}
\newcommand{\volnumber}[1]{%
  \def\@volnumber{\uppercase{#1}}}
\def\ps@plain{\ps@empty
  \def\@oddfoot{\@monthyear \hfil \thepage}%
  \def\@evenfoot{\thepage \hfil \@volnumber}}
\def\ps@firstpage{\ps@plain}
\def\ps@headings{\ps@empty
  \def\@evenhead{%
    \setTrue{runhead}%
    \def\thanks{\protect\thanks@warning}%
    \uppercase{\ }\hfil}%
  \def\@oddhead{%
    \setTrue{runhead}%
    \def\thanks{\protect\thanks@warning}%
    \hfill\uppercase{Gaussian Behavior in Zeckendorf Decompositions From Lattices}}%
  \let\@mkboth\markboth
  \def\@evenfoot{%
    \thepage \hfil \@volnumber}%
  \def\@oddfoot{%
    \@monthyear \hfil \thepage}%
  }%
\theoremstyle{plain}
\numberwithin{equation}{section}
\newtheorem{thm}{Theorem}[section]
\newcommand{\ignore}[1]{}
\newcommand\be{\begin{eqnarray}}
\newcommand\ee{\end{eqnarray}}
\newcommand\bea{\begin{eqnarray}}
\newcommand\eea{\end{eqnarray}}
\newcommand\ben{\begin{enumerate}}
\newcommand\een{\end{enumerate}}
\newtheorem{lem}[thm]{Lemma}
\newtheorem{defi}[thm]{Definition}
\newcommand{\R}{\ensuremath{\mathbb{R}}}
\newcommand{\N}{\mathbb{N}}
\begin{document}
\monthyear{Month Year}
\volnumber{Volume, Number}
\setcounter{page}{1}
\title{Gaps of Summands of the Zeckendorf Lattice}

\author{Neelima Borade}
\address{Department of Mathematics, University of Illinois at Chicago, Chicago, IL 60607}
\email{nborad2@uic.edu}

\author{Dexter Cai}
\address{Wuhan Britain China School, Wansongyuan Road 48, Jianghan District, Wuhan, China}
\email{3182935374@qq.com}

\author{David Z. Chang}
\address{High Technology High School, Lincroft, NJ 07738}
\email{davidchang7636@gmail.com, dachang@ctemc.org}

\author{Bruce Fang}
\address{UWC Changshu China, No. 88 Kun-Cheng-Hu-Xi Road, Changshu, Jiangsu, China 215500}
\email{bjfang18@uwcchina.org}

\author{Alex Liang}
\address{Philips Exeter Academy, Exeter, NH 03833}
\email{yliang@exeter.edu}

\author{Steven J. Miller}
\address{Department of Mathematics and Statistics, Williams College, Williamstown, MA 01267}
\email{sjm1@williams.edu}



\author{Wanqiao Xu}
\address{Department of Mathematics, University of Michigan, Ann Arbor, MI 48109}
\email{wanqiaox@umich.edu}

\thanks{This work was supported by NSF Grant DMS1659037, Williams College and the Eureka Program. We thank Joshua Siktar for helpful conversations throughout the project.}

\begin{abstract}
A beautiful theorem of Zeckendorf states that every positive integer has a unique decomposition as a sum of non-adjacent Fibonacci numbers. Such decompositions exist more generally, and much is known about them. First, for any positive linear recurrence $\{G_n\}$ the number of summands in the legal decompositions for integers in $[G_n, G_{n+1})$ converges to a Gaussian distribution. Second, Bower, Insoft, Li, Miller, and Tosteson proved that the probability of a gap between summands in a decomposition which is larger than the recurrence length converges to geometric decay. While most of the literature involves one-dimensional sequences, some recent work by Chen, Guo, Jiang, Miller, Siktar, and Yu have extended these decompositions to $d$-dimensional lattices, where a legal decomposition is a chain of points such that one moves in all $d$ dimensions to get from one point to the next. They proved that some but not all properties from 1-dimensional sequences still hold. We continue this work and look at the distribution of gaps between terms of legal decompositions, and prove similar to the 1-dimensional cases that when $d=2$ the gap vectors converge to a bivariate geometric random variable.
\end{abstract}

\subjclass[2010]{11B02 (primary), 05A02 (secondary).}

\keywords{Zeckendorf Decompositions, simple jump path, two-dimensional lattice, gaps}

\maketitle

\section{Introduction}
\label{sec:introduction}


\subsection{Previous Work}

Zeckendorf's Theorem \cite{Ze} provides an alternative definition of the Fibonacci numbers $\{F_n\}$ (normally defined by $F_1=1, F_2=2$ and $F_{n+1} = F_n + F_{n-1}$ for all $n\geq 2$): they are the only sequence such that every positive integer can be written uniquely as sum of non-adjacent terms. Such a sum is called the Zeckendorf (or legal) decomposition. Similar results hold for other sequences; see for example \cite{Bes,Br,Day,Dem,FGNPT,Fr,GTNP,Ha,Ho,HW,Ke,Lek,LM1,LM2,Mw1,Mw2,Ste1,Ste2} for a representative sample of results on unique decompositions, as well as on the distribution of the number of summands in these decompositions. Most of the work to date has been on one-dimensional sequences; many of the sequences which at first appear two-dimensional, such as those in \cite{CFHMN2,CFHMNPX}, are in fact one-dimensional when viewed properly. In \cite{CCGJMSY}, the authors considered generalizations to $d$-dimensional lattices, where a legal decomposition involved a finite subset of lattice points where each point has all of its coordinates strictly smaller than the previous (thus all motion is both down and to the left). The motivation for their work was to see which properties persist. They were able to show that the distribution of the number of summands is similar, also a Gaussian, but unique decomposition is lost.

In this work we continue these investigations and look at the distribution of gaps between summands in decompositions. For many sequences the resulting distributions converge to geometric decay, with the constant arising from the largest root of the characteristic polynomial of the recurrence relation; see \cite{BBGILMT,Bow}. The question is more interesting here, as the gaps are now $d$-dimensional vectors, and there is the possibility of new behavior.

We first recall the construction from \cite{CCGJMSY}; the following description is slightly modified from that work with permission of the authors. A legal decomposition in $d$ dimensions is a finite collection of lattice points for which
\begin{enumerate}
\item each point is used at most once, and
\item if the point $(i_1, i_2, \dots, i_d)$ is included then all subsequent points $(i_1', i_2', \dots, i_d')$ have $i'_j < i_j$ for all $j \in \{1, 2, \dots, d\}$ (i.e., \emph{all} coordinates must decrease between any two points in the decomposition).
\end{enumerate}

We call these sequences of points on the $d$-dimensional lattice simple jump paths. One can weaken the second condition and merely require $i'_j \le i_j$. This restriction was imposed in \cite{CCGJMSY} to simplify the combinatorial analysis, as it led to simple closed form expressions. With additional work we can consider the more general case where it is no longer required that all coordinates decrease; see \cite{FJLLLMSS}, where the authors prove similar behavior as in \cite{CCGJMSY}.

We now construct our sequence using the above definition of legal decomposition. We concentrate on $d \in \{1, 2\}$ in the main results in this paper; similar results should hold in general, but for small $d$ there are combinatorial identities which simplify the sums that arise and lead to nice closed form expression. Whenever possible we state definitions and ancillary lemmas for the most general case possible.

When $d=1$ we write $\lbrace y_a\rbrace^{\infty}_{a = 0}$ for our sequence, which is defined by

\begin{enumerate}

\item $y_1 := 1$, and

\item if we have constructed the first $k$ terms of our sequence, the $(k+1)$\textsuperscript{st} term is the smallest integer which cannot be written as a sum of terms in the sequence, with each term used at most once.

\end{enumerate}

This case is, not surprisingly, very similar to previous one-dimensional results. A straightforward calculation shows that $a_n = 2^{n-1}$, and the legal decomposition of a number is just its binary expansion.

We now turn to the main object of study in this paper, $d=2$, and denote the general term of our sequence by $y_{i,j}$. Instead of defining the sequence by traveling along diagonals we could do right angular paths; for the purposes of this paper it does not matter, as we are only concerned with the gaps between chosen summands, and not the values of the summands (and it is the values which are influenced by the mode of construction).

\begin{enumerate}
\item Set $y_{1,1} := 1$.

\item Iterate through the natural numbers. For each such number, check if any path of numbers in our sequence with a strict leftward and downward movement between each two points sums to the number. If no such path exists, add the number to the sequence so that it is added to the shortest unfilled diagonal moving from the bottom right to the top left.

\item If a new diagonal must begin to accommodate a new number, set the value $y_{k, 1}$ to be that number, where $k$ is minimized so that $y_{k, 1}$ has not yet been assigned.
\end{enumerate}

In \eqref{ZeckendorfDiagonalSequenceSimp2D} we give the first few diagonals of the $2$-dimensional lattice. Note that we no longer have uniqueness of decomposition (for example, $25$ has two legal decompositions: $20+5$ and $24+1$).
\begin{eqnarray}
\begin{array}{cccccccccc}280 & \cdots & \cdots & \cdots & \cdots & \cdots & \cdots & \cdots & \cdots & \cdots \\157 & 263 & \cdots & \cdots & \cdots & \cdots & \cdots & \cdots & \cdots & \cdots \\84 & 155 & 259 & \cdots & \cdots & \cdots & \cdots & \cdots & \cdots & \cdots \\50 & 82 & 139 & 230 & \cdots & \cdots & \cdots & \cdots & \cdots & \cdots \\28 & 48 & 74 & 123 & 198 & \cdots & \cdots & \cdots & \cdots & \cdots \\14 & 24 & 40 & 66 & 107 & 184 & \cdots & \cdots & \cdots & \cdots \\7 & 12 & 20 & 33 & 59 & 100 & 171 & \cdots & \cdots & \cdots \\3 & 5 & 9 & 17 & 30 & 56 & 93 & 160 & \cdots & \cdots \\1 & 2 & 4 & 8 & 16 & 29 & 54 & 90 & 159 & \cdots \end{array}
\label{ZeckendorfDiagonalSequenceSimp2D}
\end{eqnarray}

The main result of \cite{CCGJMSY} is that the distribution of the number of summands among all simple jump paths starting at $(n,n)$ and ending at $(0,0)$ converges to a Gaussian as $n\to\infty$ (as all paths must have both a down and a left component, we can add an additional row and an additional column where one of the indices is zero, and require all paths to end at $(0,0)$); a similar result holds for compound paths where each step is either down, left or down and left \cite{FJLLLMSS}. We investigate the distribution of gaps between adjacent summands in legal decompositions. Before stating our results, we first introduce some notation.

\subsection{New Results}

There are several ways to define gaps in these $d$-dimensional lattice decompositions, leading to slightly different behavior. We give three possibilities here.

\begin{defi}\label{defi:gap-vector}
    For a step from $(x_{m, 1}, \dots, x_{m, d})$ to $(x_{m+1, 1}, \dots, x_{m+1, d})$, its \textbf{gap vector} is the difference $(x_{m,1}-x_{m+1,1}, \dots, x_{m,d}-x_{m+1,d})$. A simple jump path of length $k$ starting at $(a_1,a_2,\dots,a_d)$ corresponds to the set $\{(x_{i,1}-x_{i+1,1},\dots,x_{i,d}-x_{i+1,d})\}_{i=0}^{k-1}$ of $k$ gap vectors, where
\begin{itemize}
    \item $(x_{0, 1}, \dots, x_{0, d}) = (a_1, \dots, a_d)$,

    \item $(x_{k, 1}, \dots, x_{k, d}) = (0, \dots, 0)$, and

    \item for each $i \in \{0,1, \dots, k - 1\}$ and $j \in \{1, \dots, d\}$, $x_{i, j} > x_{i + 1, j}$.
\end{itemize}
\end{defi}

\begin{defi}\label{defi:gap-sum}
   Given a gap vector $(x_{i,1}-x_{i+1,1},\dots,x_{i,d}-x_{i+1,d})$, its \textbf{gap sum} is the sum of the components of the vector: $(x_{i,1}-x_{i+1,1})+\cdots+(x_{i,d}-x_{i+1,d})$. Similarly the \textbf{gap distance} is the length of the gap vector.
\end{defi}

There are three natural quantities we can investigate. We can look at the gap vectors, the gap sums, or the gap distances. The distribution of the gap vectors is the most fundamental quantity, and much of the combinatorics is a natural generalization of previous work for the one-dimensional case \cite{BBGILMT,Bow}. Knowing the distribution of the gap vectors, we can calculate the distribution of the gap sums by summing over all gap vectors with the same gap sum. The last notion, the gap distance, is harder as this requires summing over a subset of gap vectors to obtain a given gap distance. Note we can interpret the difference between these two perspectives as arising from the norm we use to measure the length of the gap vector; the gap sum comes from using the $L^1$ norm while the gap distance is from the $L^2$ norm.

Our main result is that as $n$ goes to infinity, the distribution of the gap vectors in the $2$-dimensional lattice converges to a geometric decay, and thus we see similar behavior as in the one-dimensional cases.

\begin{thm}\label{thm:intro-gap-vector-dist}
    Let $n$ be a positive integer, consider the distribution of gap vectors among all simple jump paths of dimension $2$ with starting point $(n + 1,n + 1)$. For fixed positive integers $v_1, v_2$, the probability that a gap vector equals $(v_1,v_2)$ converges to $1/2^{v_1+v_2}$ as $n\to\infty$.
\end{thm}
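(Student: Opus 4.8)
The plan is to count, for each target gap vector $(v_1,v_2)$, the number of simple jump paths from $(n+1,n+1)$ to $(0,0)$ that use $(v_1,v_2)$ as one of their gap vectors, and divide by the total number of (gap vector, path) incidences. The key structural observation is that a simple jump path of length $k$ from $(n+1,n+1)$ to $(0,0)$ is equivalent to choosing strictly decreasing sequences $n+1 > x_1 > x_2 > \cdots > x_{k-1} > 0$ in each coordinate independently, since condition (2) of the legal decomposition forces \emph{both} coordinates to strictly decrease at every step. Thus the two coordinates decouple: a path of length $k$ corresponds to a pair $(S_1, S_2)$ where each $S_\ell$ is a $(k-1)$-element subset of $\{1, \dots, n\}$ (the intermediate values in coordinate $\ell$), and the gap vectors are read off as consecutive differences. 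This decoupling is what makes the $2$-dimensional answer factor as $1/2^{v_1} \cdot 1/2^{v_2}$.

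First I would set up the generating-function or direct-counting bookkeeping for the one-dimensional marginal. For a single coordinate, a step with gap $v$ means consecutive chosen values differ by $v$; I would count the number of ways to complete a strictly decreasing chain from $n+1$ to $0$ that includes a prescribed consecutive difference $v$ at some position, summed over all path lengths and all positions. Letting $T(n)$ denote a weighted count of all (chain, marked-step) pairs and $T_v(n)$ the count where the marked step has gap exactly $v$, the one-dimensional probability is $T_v(n)/T(n)$, and the known one-dimensional results (\cite{BBGILMT,Bow}) give $T_v(n)/T(n) \to 1/2^v$. Next I would carry out the analogous two-dimensional count: because the coordinates are independent given the common path length $k$, the joint count of paths with a marked step having gap vector exactly $(v_1,v_2)$ factors, \emph{after} one accounts for the constraint that both coordinates share the same length $k$ and the marked step occupies the same index in both.

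The main obstacle, and the step I expect to require the most care, is precisely that the shared-length coupling prevents a naive product of the two marginals. The number of length-$k$ chains in one coordinate is $\binom{n}{k-1}$, so the total number of $2$-dimensional paths is $\sum_k \binom{n}{k-1}^2$, not $\left(\sum_k \binom{n}{k-1}\right)^2$; the binomial square is what introduces genuinely two-dimensional combinatorics. I would therefore compute, for a fixed gap vector $(v_1,v_2)$ inserted at a fixed step index $i$ in a length-$k$ path, the product of the counts of ways to fill the remaining strictly decreasing values in each coordinate before and after that step, which yields a product of four binomial coefficients whose indices depend on $v_1, v_2, i, k$. Summing over $i$ and $k$ and normalizing by $\sum_k k\binom{n}{k-1}^2$ (the total incidence count, weighting each length-$k$ path by its $k$ gap vectors) gives the desired probability.

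To extract the limit I would apply a Vandermonde-type identity to collapse the $i$-sum, reducing the numerator and denominator to single sums of squared binomials with shifted arguments, and then take $n\to\infty$. The heuristic to confirm is that the dominant contribution comes from chains whose length $k$ concentrates near $n/2$ (by the Gaussian behavior of the summand count established in \cite{CCGJMSY}), and near this concentration point the ratio of the relevant shifted squared-binomial sums tends to $\left(\tfrac12\right)^{v_1}\left(\tfrac12\right)^{v_2} = 1/2^{v_1+v_2}$. Making the interchange of limit and sum rigorous, and controlling the error terms away from the concentration point, is the technical heart of the argument; here I would invoke the concentration from \cite{CCGJMSY} to bound the tails and dominated-convergence-style estimates to justify passing the limit inside.
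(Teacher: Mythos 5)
Your proposal is correct in outline but organizes the count differently from the paper, and the two routes are worth contrasting. The paper parametrizes occurrences of the gap vector $(v_1,v_2)$ by the \emph{spatial location} of the gap's lower endpoint $(x,y)$: Lemma \ref{lem:numberofgapstartingatapoint} writes the count for a fixed location as a product of two path counts $s_2(x,y)\cdot s_2(n-x-v_1+1,n-y-v_2+1)$, Lemma \ref{numberofagap} sums over $(x,y)$ (grouping by $x+y$ and applying Vandermonde, plus two boundary cases), and the limit is then extracted from an exact closed form via Stirling. You instead parametrize by the step index $i$ and the path length $k$, exploiting the decoupling of the two coordinates into strictly decreasing chains of common length; this is a legitimate alternative, and in fact for fixed $k$ the number of chains from $n+1$ to $0$ with $k$ steps whose $i$-th step has gap exactly $v$ is $\binom{n-v}{k-2}$ \emph{independently of} $i$ (by the composition bijection), so the $i$-sum contributes only a factor of $k$ and your boundary cases disappear — a genuine simplification over the paper's three-case analysis. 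Where your plan is muddled is the final step: Vandermonde is not needed for the $i$-sum but for the $k$-sum, and if you apply it there, $\sum_k k\binom{n-v_1}{k-2}\binom{n-v_2}{k-2}$ collapses to exactly the paper's closed form $(2n-v_1-v_2-1)\binom{2n-v_1-v_2-2}{n-v_1-1}+2\binom{2n-v_1-v_2}{n-v_1}$, after which Stirling finishes the job and no concentration or dominated-convergence argument is needed at all. Your proposed alternative — concentration of $k$ near $n/2$ plus tail control — would also work (the summand ratio $\binom{n-v_1}{k-2}\binom{n-v_2}{k-2}/\binom{n}{k-1}^2$ does tend to $2^{-v_1-v_2}$ uniformly on the bulk), but it is strictly more technical than the closed-form route you already have within reach, and it is the part of your write-up that remains a sketch rather than a proof.
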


We prove Theorem \ref{thm:intro-gap-vector-dist} in \S \ref{sec:gapsin2-dimensionallattice} through combinatorial identities and Stirling's formula, but for larger $d$ the combinatorial lemmas do not generalize. As an immediate consequence, we obtain the distribution of the gap sums.

\begin{thm}\label{thm:intro-probabilityofagapdef2}
    Let $n$ be a positive integer, consider the distribution of gap sums among all simple jump paths of dimension $2$ with starting point $(n+1,n+1)$. The probability that a gap sum equals an integer $v \ge 2$ converges to $(v-1)/2^v$ as $n\to\infty$ (the probability of a gap sum of 0 or 1 is zero).
\end{thm}

After reviewing properties of simple jump paths in \S\ref{sec: propertiesofsimplepaths}, we prove our main results in \S\ref{sec:gapsin2-dimensionallattice}, and then conclude the paper with questions on alternative definitions of gaps, generalizations to compound paths, and distribution of the longest gap (where we present some partial results). Whenever possible we state and prove results for arbitrary dimensions, to facilitate future research.

\section{Properties of Simple Jump Paths}\label{sec: propertiesofsimplepaths}
We first recall some notation for our simple jump paths from \cite{CCGJMSY}. As our paper is an extension of \cite{CCGJMSY},
the following four paragraphs are reproduced with permission from them.

We have walks in $d$ dimensions starting at some initial point $(a_1, a_2, \dots, a_d)$ with each $a_j  > 0$, and ending at the origin $(0, 0, \dots, 0)$. Note that our simple jump paths must always have movement in all dimensions at each step. We are just adding one extra point, at the origin, and saying every path must end there. Note that as we always change all of the indices during a step, we never include a point where only some of the coordinates are zero, and thus there is no issue in adding one extra point and requiring all paths to end at the origin.

Our walks are sequences of points on the lattice grid with positive indices or the origin, and we refer to movements between two such consecutive points as \textbf{steps}. Thus a simple jump path is a walk where each step has a strict movement in all $d$-dimensions. More formally, a simple jump path of length $k$ starting at $(a_1, a_2, \dots, a_d)$ is a sequence of points $\{(x_{i, 1}, \dots, x_{i, d})\}^{k}_{i = 0}$ where the following hold:
\begin{itemize}
\item $(x_{0, 1}, \dots, x_{0, d}) = (a_1, \dots, a_d)$,

\item $(x_{k, 1}, \dots, x_{k, d}) = (0, \dots, 0)$, and

\item for each $i \in \{0,1, \dots, k - 1\}$ and $j \in \{1, \dots, d\}$, $x_{i, j} > x_{i + 1, j}$.

\end{itemize}

For a fixed $d$ and any choice of starting point $(a_1, a_2, \dots, a_d) \in \R^d$, we let $s_d( a_1,\dots,a_d)$ denote the number of simple jump paths starting at $(a_1, a_2, \dots, a_d)$ and ending at $(0, \dots, 0)$, and $t_d(k; a_1,\dots,a_d)$ the subset of these paths with length $k$. In particular, when $a_1=\cdots = a_d = n$ for a fixed $n \in \mathbb{N}^+$, we let $s_d(n)$ denote the number of simple jump paths from $(n, n, \dots, n)$ to the origin, and $t_d(k, n)$ the subset of these paths with exactly $k$ steps. As we must reach the origin, every path has at least 1 step, the maximum number of steps is $n$, and
\begin{equation} \label{simpleJumpPathPartitionByNumSteps}
s_d(n) \ = \ \sum_{k = 1}^{n} t_d(k, n).
\end{equation}

We now determine $t_d(k, n)$. In one dimension we have $t_d(k, n) = \binom{n-1}{k-1}$, as we must choose exactly $k-1$ of the first $n-1$ terms (we must choose the $n^{\text{th}}$ term as well as the origin, and thus choosing $k-1$ additional places ensures there are exactly $k$ steps). Since in higher dimension there is movement in each dimension for each step, for $1 \leq k \leq \min(a_1,\dots, a_d)$,
\begin{eqnarray}\label{eq:t_d}
t_d(k; a_1, \dots, a_d) \ = \ \binom{a_1-1}{k-1}\binom{a_2-1}{k-1}\cdots\binom{a_d-1}{k-1},
\end{eqnarray} and
\begin{eqnarray}\label{eq:s_d}
s_d(a_1, \dots, a_d) \ = \ \sum_{k=1}^{\min(a_1, \dots, a_d)} t_d(k;a_1, \dots, a_d).
\end{eqnarray}

From the binomial theorem we have $s_1(a_1) = 2^{a_1-1}$. For higher dimensions we need another well-known combinatorial result: Vandermonde's identity. We reproduce the proof here for completeness.

\begin{lem}[Vandermonde's Identity]\label{lem:binomial identity}
For $\alpha,\beta\in\N$,
\begin{equation}
\sum_{k=0}^{\gamma}\binom{\alpha}{\gamma-k}\binom{\beta}{k} \ = \ \binom{\alpha+\beta}{\gamma}.
\end{equation}

\begin{proof}
Suppose there are $\alpha+\beta$ balls, with $\alpha$ of them red and $\beta$ of them blue. We want to select $\gamma$ balls in total; there are $\binom{\alpha+\beta}{\gamma}$ ways to do this. We can also calculate this by splitting into cases of the number of blue balls chosen. There are $\binom{\alpha}{\gamma-k}\binom{\beta}{k}$ ways to choose $k$ blue balls and $\gamma-k$ red balls. Summing over all cases, it's clear that
\begin{equation}\label{eq:numberofsimplepaths}
\sum_{k=0}^{\gamma}\binom{\alpha}{\gamma-k}\binom{\beta}{k} \ = \ \binom{\alpha+\beta}{\gamma},
\end{equation}
which completes the proof.
\end{proof}
\end{lem}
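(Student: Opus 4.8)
The plan is to deduce Vandermonde's identity from the binomial theorem, which the paper has already invoked to compute $s_1(a_1) = 2^{a_1 - 1}$. The starting point is the elementary polynomial identity
\begin{equation}
(1+x)^{\alpha}(1+x)^{\beta} \ = \ (1+x)^{\alpha+\beta},
\end{equation}
viewed as an equality of polynomials in the formal variable $x$ for fixed $\alpha, \beta \in \N$. The idea is that extracting the coefficient of a single power of $x$ from each side produces exactly the two quantities appearing in the lemma.

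First I would expand every factor by the binomial theorem, writing $(1+x)^{\alpha} = \sum_{i=0}^{\alpha}\binom{\alpha}{i}x^i$, $(1+x)^{\beta} = \sum_{j=0}^{\beta}\binom{\beta}{j}x^j$, and $(1+x)^{\alpha+\beta} = \sum_{\gamma=0}^{\alpha+\beta}\binom{\alpha+\beta}{\gamma}x^{\gamma}$. Next I would read off the coefficient of $x^{\gamma}$ on each side. Multiplying the first two series and collecting all products $x^{i}x^{j}$ with $i + j = \gamma$ (so that $j = k$ and $i = \gamma - k$) gives $\sum_{k=0}^{\gamma}\binom{\alpha}{\gamma-k}\binom{\beta}{k}$, whereas the coefficient of $x^{\gamma}$ on the right is simply $\binom{\alpha+\beta}{\gamma}$. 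Since two polynomials are equal if and only if their coefficients agree term by term, equating these two expressions finishes the proof.

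The only point requiring any care --- and it is the nearest thing to an obstacle --- is the bookkeeping on the summation range. I would note that terms with $\gamma - k > \alpha$ or $k > \beta$ contribute nothing, because $\binom{n}{m} = 0$ whenever $m > n$, so running the index $k$ from $0$ to $\gamma$ (rather than over the narrower range actually forced by $\alpha$ and $\beta$) is harmless and matches the statement verbatim. There is no deeper difficulty. An equally short route, and the one I would fall back on if a purely combinatorial argument were preferred, is double counting: the number of ways to choose $\gamma$ objects from a pool of $\alpha$ red and $\beta$ blue objects equals $\binom{\alpha+\beta}{\gamma}$, and partitioning these selections according to the number $k$ of blue objects chosen yields the left-hand side directly.
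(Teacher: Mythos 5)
Your proof is correct, but your primary route differs from the paper's: you extract the coefficient of $x^{\gamma}$ from the polynomial identity $(1+x)^{\alpha}(1+x)^{\beta} = (1+x)^{\alpha+\beta}$, whereas the paper gives the combinatorial double-counting argument (choosing $\gamma$ balls from $\alpha$ red and $\beta$ blue ones, split by the number of blue balls chosen) --- which is exactly the fallback you sketch in your last sentence. Both arguments are complete; your attention to the summation range, noting that $\binom{n}{m}=0$ for $m>n$ makes the full range $0 \le k \le \gamma$ harmless, is a point the paper glosses over. The generating-function route has the mild advantage of sitting naturally alongside the paper's earlier appeal to the binomial theorem for $s_1(a_1) = 2^{a_1-1}$, and it makes transparent \emph{why} the analogous simplification fails for $d \ge 3$: the sums that arise there, such as $\sum_k \binom{a_1-1}{k-1}\cdots\binom{a_d-1}{k-1}$, are not coefficient extractions from a product of the form $(1+x)^{\alpha}(1+x)^{\beta}$, so no single closed form drops out. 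The paper's bijective proof is more self-contained and arguably more in the spirit of the surrounding combinatorial analysis, but the two are interchangeable here.
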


We can now determine the number of simple paths in $2$ dimensions. The result below is an extension of results from \cite{CCGJMSY}, where only the special case $a_1 = a_2$ has been proved.

\begin{thm}\label{thm:numberofsimplepaths}
In 2-dimensional lattice,
\begin{equation}
s_2(a_1,a_2) \ = \ \binom{a_1+a_2-2}{a_1 - 1}.
\end{equation}

\begin{proof}
From \eqref{eq:t_d} and \eqref{eq:s_d}, we have
\begin{equation}
s_2(a_1,a_2) \ = \ \sum_{k=1}^{\min(a_1,a_2)} \binom{a_1-1}{k-1} \binom{a_2-1}{k-1}.
\end{equation}
Without loss of generality, assume $\min(a_1,a_2) = a_1$. Then
\begin{eqnarray}
s_2(a_1,a_2) & \ = \ & \sum_{k=1}^{a_1} \binom{a_1-1}{k-1} \binom{a_2-1}{k-1}\nonumber\\
& \ = \ & \sum_{k=0}^{a_1-1} \binom{a_1-1}{a_1-1-k} \binom{a_2-1}{k}.
\end{eqnarray}
Applying Lemma \ref{lem:binomial identity},
\begin{equation}\label{eq:generalizationofnumberofsimplepaths}
s_2(a_1,a_2) \ = \ \binom{a_1+a_2-2}{a_1-1} \ = \ \binom{a_1+a_2-2}{a_2-1}.
\end{equation}
\end{proof}
\end{thm}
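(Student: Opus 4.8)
The plan is to reduce the claimed formula directly to Vandermonde's identity (Lemma \ref{lem:binomial identity}), which is already in hand. Beginning from \eqref{eq:t_d} and \eqref{eq:s_d} specialized to $d=2$, I would write
\begin{equation}
s_2(a_1,a_2) \ = \ \sum_{k=1}^{\min(a_1,a_2)} \binom{a_1-1}{k-1}\binom{a_2-1}{k-1}.
\end{equation}
Since the target expression $\binom{a_1+a_2-2}{a_1-1}$ equals $\binom{a_1+a_2-2}{a_2-1}$, it is symmetric in $a_1$ and $a_2$, so I may assume without loss of generality that $\min(a_1,a_2)=a_1$.

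The core of the argument is a reindexing together with the reflection symmetry of the binomial coefficient. Relabeling via $k-1 \mapsto k$ shifts the range to $k \in \{0,\dots,a_1-1\}$ and turns the summand into $\binom{a_1-1}{k}\binom{a_2-1}{k}$. Replacing the first factor by $\binom{a_1-1}{(a_1-1)-k}$ puts the sum into the exact shape of the left-hand side of Lemma \ref{lem:binomial identity} with $\alpha=a_1-1$, $\beta=a_2-1$, and $\gamma=a_1-1$; the upper summation limit $a_1-1$ coincides precisely with $\gamma$, so the ranges match and the lemma applies verbatim. Its right-hand side is then $\binom{(a_1-1)+(a_2-1)}{a_1-1}=\binom{a_1+a_2-2}{a_1-1}$, which is the claim.

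I do not anticipate a genuine obstacle here; the entire content lies in recognizing which binomial factor to reflect so that the summation index and the parameter $\gamma$ line up with the statement of Vandermonde. The one point meriting a sentence of justification is that the reflected sum's range $0 \le k \le a_1-1$ agrees exactly with the range $0 \le k \le \gamma$ in the lemma, so no terms are silently added or dropped; the symmetry of the final binomial coefficient then renders the $\min(a_1,a_2)=a_1$ assumption harmless. An alternative route, if one wished to avoid choosing an ordering of $a_1,a_2$, would be to extend the sum to all $k$ (the extra terms vanish because $\binom{a_j-1}{k-1}=0$ once $k-1$ exceeds $a_j-1$) and apply Vandermonde on the full range, but the reflection argument above is cleaner and is the approach I would present.
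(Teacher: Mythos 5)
Your proposal is correct and follows essentially the same route as the paper's own proof: specialize \eqref{eq:t_d} and \eqref{eq:s_d} to $d=2$, assume $\min(a_1,a_2)=a_1$ without loss of generality, shift the index and reflect one binomial factor, and apply Lemma \ref{lem:binomial identity} with $\gamma=a_1-1$. The extra remarks you include (the range check and the symmetry of the target) are fine but not a departure from the paper's argument.
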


Note that when $a_1 = a_2 = n$, we have
\begin{equation}
s_2(n) = \binom{2n-2}{n-1}.
\end{equation}


\section{Gaps in 2-dimensional Lattice}\label{sec:gapsin2-dimensionallattice}

In $1$-dimension case, the notion of gaps between adjacent points in a simple jump path is unambiguous; however, for $d\geq 2$, we have several choices. Below we concentrate on the gap vector.

%
%

We let $g_d(n)$ denote the number of gap vectors of all simple jump paths from $(n,n,\dots,n)$ to the origin. We add the origin to the path to facilitate the counting; as all steps must be both down and to the left, the origin is the only point where one of the indices is zero. While this does introduce one extra gap, as $n\to\infty$ the contribution from it is negligible and can thus be safely ignored. This addition now means that each simple jump path of length $k$ contains $k$ gap vectors, and every legal path has at least one and at most $n$ gap vectors. Thus
\begin{equation}\label{simpleJumpPathPartitionByNumGaps}
    g_d(n) \ = \ \sum_{k=1}^n k \ t_d(k,n).
\end{equation}


To prove Theorems \ref{thm:intro-gap-vector-dist} and \ref{thm:intro-probabilityofagapdef2}, we begin with determining $g\left(n;(v_1,v_2)\right)$, defined as the number of gap vectors $(v_1,v_2)$ in all simple jump paths starting from $(n,n)$ and ending at $(0,0)$. Then we find $g_2(n)$, the total number of gap vectors. Due to the presence of $n-1$ in the formula for $s_2(n)$, we work with $n+1$ below to simplify some of the algebra.

\begin{lem}\label{lem:numberofgapstartingatapoint}
Consider all the simple jump paths from $(n+1,n+1)$ to $(0,0)$ in the $2$-dimensional lattice. Let $G\left((x,y),(x+v_1,y+v_2)\right)$ denote the number of gap vectors $(v_1,v_2)$ starting at $(x+v_1,y+v_2)$ and ending at $(x,y)$ within all simple jump paths from $(n+1,n+1)$ to $(0,0)$, then
\begin{equation}
G\left((x,y), (x+v_1,y+v_2)\right) \ = \ \binom{x+y-2}{x-1}\binom{2n-v_1-v_2-x-y}{n-v_1-x}.
\end{equation}

\begin{proof}
Since each different arrangement of simple jump paths from $(n+1,n+1)$ to $(x+v_1,y+v_2)$ and from $(x,y)$ to $(0,0)$ contributes $1$ to the number of gap vectors $(v_1,v_2)$, $G\left((x,y), (x+v_1,y+v_2)\right)$ is given by the number of simple jump paths from $(x,y)$ to $(0,0)$ times the number of simple paths from $(n+1,n+1)$ to $(x+v_1,y+v_2)$; see Figure \ref{fig:gridgapbig}. 

\begin{figure}
\begin{center}
\scalebox{.6}{\includegraphics{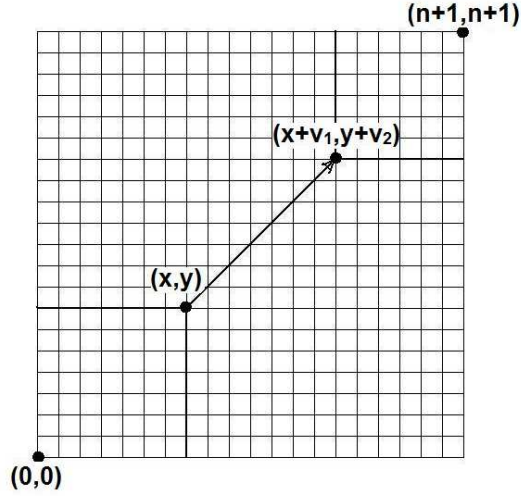}}
\caption{\label{fig:gridgapbig} Set-up to compute the number of simple jump paths from $(0,0)$ to $(n+1,n+1)$ with a gap of $(v_1,v_2)$ starting at $(x,y)$. Counting the number of such paths is the same as counting the number of paths in the bottom left rectangle and multiplying by the number of paths in the top right.}
\end{center}\end{figure}

Shifting the coordinates $(x+v_1,y+v_2)$ and $(n+1,n+1)$ down to $(0,0)$ and $(n-x-v_1+1,n-y-v_2+1)$ respectively, we obtain
\begin{equation}
G\left((x,y),(x+v_1,y+v_2)\right) \ = \ s_2\left(x,y\right)\cdot s_2\left(n-x-v_1+1,n-y-v_2+1\right).
\end{equation}
Applying Theorem \ref{thm:numberofsimplepaths},
\begin{equation}
G\left((x,y),(x+v_1,y+v_2)\right) \ = \ \binom{x+y-2}{x-1}\binom{2n-v_1-v_2-x-y}{n-v_1-x}.
\end{equation}
\end{proof}
\end{lem}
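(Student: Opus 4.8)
The plan is to count, over all simple jump paths from $(n+1,n+1)$ to the origin, how many times the single step from $(x+v_1,y+v_2)$ to $(x,y)$ occurs. Since each path visits every lattice point at most once, a path contains this step at most once, so the number of such gap vectors equals the number of paths that use this step. I would set up a bijection between these paths and ordered pairs consisting of a path from $(n+1,n+1)$ down to $(x+v_1,y+v_2)$ and a path from $(x,y)$ down to $(0,0)$: any path through the step splits uniquely at that step into an upper piece and a lower piece, and conversely any upper piece and lower piece concatenate through the step into a valid simple jump path. Legality is preserved across the junction because the step itself strictly decreases both coordinates, which requires only $v_1,v_2\ge 1$, exactly the hypothesis that the gap vector has positive entries. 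By the product principle the count then factors as (number of upper paths) $\times$ (number of lower paths).

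Next I would evaluate each factor using the closed form from Theorem \ref{thm:numberofsimplepaths}. The lower factor is immediate: the number of simple jump paths from $(x,y)$ to $(0,0)$ is $s_2(x,y)=\binom{x+y-2}{x-1}$. For the upper factor, the paths run from $(n+1,n+1)$ to $(x+v_1,y+v_2)$ rather than to the origin, so I would translate the entire configuration by subtracting $(x+v_1,y+v_2)$ from every coordinate. This sends the endpoint to the origin and the starting point to $(n-x-v_1+1,\,n-y-v_2+1)$, and it clearly leaves the number of paths unchanged. Hence the upper factor equals $s_2(n-x-v_1+1,\,n-y-v_2+1)$.

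Finally I would substitute into $s_2(a_1,a_2)=\binom{a_1+a_2-2}{a_1-1}$ with $a_1=n-x-v_1+1$ and $a_2=n-y-v_2+1$, simplify $a_1+a_2-2=2n-v_1-v_2-x-y$ and $a_1-1=n-v_1-x$, and multiply the two factors to obtain the claimed expression.

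The main obstacle here is bookkeeping rather than anything conceptual: I expect the translation step and the ensuing index arithmetic inside the binomial coefficients to be the only place where errors are likely, so I would track the shift carefully and double-check the final arguments $2n-v_1-v_2-x-y$ and $n-v_1-x$. The one genuinely substantive point is the bijection, and specifically that the glued path is legal; this is secure precisely because the inserted step strictly decreases both coordinates, so the concatenation of two legal pieces through it remains a legal simple jump path.
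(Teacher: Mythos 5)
Your proposal is correct and follows essentially the same route as the paper: split each path containing the step at that step into an upper piece from $(n+1,n+1)$ to $(x+v_1,y+v_2)$ and a lower piece from $(x,y)$ to $(0,0)$, multiply the two counts, and evaluate each via the translation-invariant closed form $s_2(a_1,a_2)=\binom{a_1+a_2-2}{a_1-1}$. Your added remarks on why the glued path remains legal and on the index arithmetic are sound and only make explicit what the paper leaves implicit.
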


Now we determine the range of $x,y$ in Lemma \ref{lem:numberofgapstartingatapoint}. If a gap vector $(v_1,v_2)$ starts at $(x+v_1,y+v_2)$ and ends at $(x,y)$, then it is clear that $x,y\geq 0$. Since we are only considering simple jump paths from $(n+1,n+1)$ to $(0,0)$, the components of $(x+v_1,y+v_2)$ cannot exceed $n+1$. Thus $x \le n-v_1+1$ and $y \le n-v_2+1$. Combining, we have $0\le x \le n-v_1+1$ and $0\le y \le n-v_2+1$.

\begin{lem}\label{numberofagap}
Recall that $g\left(n+1;(v_1,v_2)\right)$ denotes the number of gap vectors $(v_1,v_2)$ in all simple jump paths starting from $(n+1,n+1)$ and ending at $(0,0)$. Then
\begin{equation}
g\left(n+1;(v_1,v_2)\right)\ =\ (2n-v_1-v_2-1)\binom{2n-v_1-v_2-2}{n-v_1-1}+2\binom{2n-v_1-v_2}{n-v_1}.
\end{equation}

\begin{proof}
For ease of calculation, we break into three cases:
\begin{enumerate}
\item $1\le x\le n-v_1$ and $1\le y\le n-v_2$,
\item $x=0$ and $y=0$,
\item $x=n-v_1+1$ and $y=n-v_2+1$.
\end{enumerate}
We first consider Case (1). By Lemma \ref{lem:numberofgapstartingatapoint}, the number of gap vectors $(v_1,v_2)$ is given by
\begin{equation}\label{eq:case1}
\sum_{x=1}^{n-v_1}\sum_{y=1}^{n-v_2}G\left((x,y),(x+v_1,y+v_2)\right) \ = \ \sum_{x=1}^{n-v_1}\sum_{y=1}^{n-v_2}\binom{x+y-2}{x-1}\binom{2n-v_1-v_2-x-y}{n-v_1-x}.
\end{equation}
Shifting the index of $x,y$ in the sum, the right hand side of \eqref{eq:case1} becomes
\begin{equation}\label{eq:case1simplify}
\sum_{x=0}^{n-v_1-1}\sum_{y=0}^{n-v_2-1}\binom{x+y}{x}\binom{2n-v_1-v_2-x-y-2}{n-v_1-x-1}.
\end{equation}
Letting $p=n-v_1-1$ and $q=n-v_2-1$, it is equivalent to calculate
\begin{equation}
\sum_{x=0}^{p} \sum_{y=0}^{q}\binom{x+y}{x}\binom{p+q-(x+y)}{p-x}.
\end{equation}
In the sum, since $0\le x\le p$ and $0\le y\le q$,
\begin{equation}
0\ \le\ x+y\ \le\ p+q,
\end{equation}
so there are $p+q+1$ different values of $x+y$. Now we bring the problem down to proving that, for each fixed value of $x+y$,
\begin{equation}\label{eq:fixing_x+y}
\sum_{x=0}^{p}\binom{x+y}{x}\binom{p+q-(x+y)}{p-x} \ = \ \binom{p+q}{p},
\end{equation}
which is an immediate application of lemma \ref{lem:binomial identity}. Note that not all terms in the left hand side of \eqref{eq:fixing_x+y} are necessarily non-zero, as $x$ cannot exceed $x+y$.

Thus the number of gap vectors $(v_1,v_2)$ in Case (1) is
\begin{equation}
(p+q+1)\binom{p+q}{p} \ = \  (2n-v_1-v_2-1)\binom{2n-v_1-v_2-2}{n-v_1-1}.
\end{equation}
We then consider Case (2). When $x=0$, $y=0$, the number of gap vectors $(v_1,v_2)$ equals the number of simple jump paths from $(n+1,n+1)$ to $(v_1,v_2)$. Shifting the coordinates $(n+1,n+1)$ and $(v_1,v_2)$ down to $(n+1-v_1,n+1-v_2)$ and $(0,0)$, respectively, the number of gap vectors in this case $(v_1,v_2)$ is just $s_2(n+1-v_1,n+1-v_2)$.
Applying Theorem \ref{thm:numberofsimplepaths},
\begin{equation}
s_2(n+1-v_1,n+1-v_2) \ = \ \binom{2n-v_1-v_2}{n-v_1}.
\end{equation}
Similarly, in Case (3), the number of gap vectors $(v_1,v_2)$ is
\begin{equation}
s_2(n+1-v_1,n+1-v_2) \ = \ \binom{2n-v_1-v_2}{n-v_1}.
\end{equation}
Summing up all three cases,
\begin{equation}
g\left(n+1;(v_1,v_2)\right) \ = \ (2n-v_1-v_2-1)\binom{2n-v_1-v_2-2}{n-v_1-1} +2 \binom{2n-v_1-v_2}{n-v_1}.
\end{equation}
\end{proof}
\end{lem}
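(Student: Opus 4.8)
The plan is to obtain $g(n+1;(v_1,v_2))$ by summing the local count $G\bigl((x,y),(x+v_1,y+v_2)\bigr)$ from Lemma \ref{lem:numberofgapstartingatapoint} over every admissible lower endpoint $(x,y)$ of a gap vector equal to $(v_1,v_2)$, i.e.\ over the range $0\le x\le n-v_1+1$ and $0\le y\le n-v_2+1$ established above. The one wrinkle is that the closed form for $G$ arises from the product $s_2(x,y)\cdot s_2(n+1-v_1-x,\,n+1-v_2-y)$, and the path-count formula of Theorem \ref{thm:numberofsimplepaths} degenerates at the two corners of this rectangle: when $(x,y)=(0,0)$ the gap terminates at the origin (so there is no remaining path below it to count), and when $(x,y)=(n-v_1+1,n-v_2+1)$ the gap originates at $(n+1,n+1)$ (so there is no path above it). Because a simple jump path never visits a point with exactly one zero coordinate, and no point other than the start has a coordinate equal to $n+1$, these two corners are the only degenerate endpoints; moreover $x=0\iff y=0$ and $x=n-v_1+1\iff y=n-v_2+1$. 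This justifies splitting the sum cleanly into the interior block $1\le x\le n-v_1$, $1\le y\le n-v_2$ together with the two corner contributions, with no stray edge cases along the sides.

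The heart of the argument is the interior double sum. After substituting the formula for $G$ and shifting both indices down by one, I would set $p=n-v_1-1$ and $q=n-v_2-1$ to rewrite the interior contribution as
\[
\sum_{x=0}^{p}\sum_{y=0}^{q}\binom{x+y}{x}\binom{p+q-(x+y)}{p-x}.
\]
The key maneuver is to reorganize this sum by the value of $s=x+y$, which ranges over the $p+q+1$ integers from $0$ to $p+q$. For each fixed $s$ the inner sum $\sum_{x}\binom{s}{x}\binom{p+q-s}{p-x}$ is exactly Vandermonde's identity (Lemma \ref{lem:binomial identity}) with parameters $\alpha=s$, $\beta=p+q-s$, $\gamma=p$, and hence collapses to the single binomial $\binom{p+q}{p}$, independent of $s$. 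Summing this constant over the $p+q+1$ values of $s$ gives $(p+q+1)\binom{p+q}{p}$; rewriting in terms of $n,v_1,v_2$ yields the first term $(2n-v_1-v_2-1)\binom{2n-v_1-v_2-2}{n-v_1-1}$.

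The two corner terms are then quick. At $(x,y)=(0,0)$ the number of gap vectors equals the number of simple jump paths from $(n+1,n+1)$ to $(v_1,v_2)$, which after translating that corner to the origin is $s_2(n+1-v_1,n+1-v_2)$; the symmetric corner $(x,y)=(n-v_1+1,n-v_2+1)$ gives the same value by an identical translation. Applying Theorem \ref{thm:numberofsimplepaths} twice produces $2\binom{2n-v_1-v_2}{n-v_1}$, and adding this to the interior contribution gives the claimed formula.

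I expect the main obstacle to be the evaluation of the interior double sum: the reindexing to $(p,q)$ and, above all, recognizing that stratifying by the value of $x+y$ turns the inner sum into a direct instance of Vandermonde is the one nontrivial idea. A secondary but genuine subtlety worth stating carefully is the case split itself—one must verify that the degeneracies of the $G$-formula occur only at the two corners and that the boundary coordinates are linked ($x=0\iff y=0$ and $x=n-v_1+1\iff y=n-v_2+1$), so that the interior rectangle plus the two corners really does account for every admissible endpoint.
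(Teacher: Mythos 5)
Your proposal is correct and follows essentially the same route as the paper's proof: the same three-way split into the interior rectangle and the two degenerate corners, the same reindexing to $p=n-v_1-1$, $q=n-v_2-1$, and the same stratification by $x+y$ followed by Vandermonde's identity to collapse each slice to $\binom{p+q}{p}$. Your explicit justification that the boundary coordinates are linked ($x=0\iff y=0$ and $x=n-v_1+1\iff y=n-v_2+1$) is a welcome addition that the paper leaves implicit.
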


\begin{lem}\label{lem:totalnumberofgaps}
\begin{equation}
g_2\left(n+1\right) \ = \ \left(\frac{n}{2}+1\right)\binom{2n}{n}.
\end{equation}
\end{lem}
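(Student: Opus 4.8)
The plan is to bypass the sum over all gap vectors coming from Lemma \ref{numberofagap} and instead compute $g_2(n+1)$ directly from the partition formula \eqref{simpleJumpPathPartitionByNumGaps}. Setting $d = 2$ and replacing $n$ by $n+1$ there gives $g_2(n+1) = \sum_{k=1}^{n+1} k\, t_2(k, n+1)$, and by \eqref{eq:t_d} we have $t_2(k, n+1) = \binom{n}{k-1}^2$. After the substitution $j = k-1$ the quantity to evaluate is $\sum_{j=0}^n (j+1)\binom{n}{j}^2$, which I would split as $\sum_{j=0}^n j\binom{n}{j}^2 + \sum_{j=0}^n \binom{n}{j}^2$ and handle the two pieces separately.

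For the second sum, writing $\binom{n}{j} = \binom{n}{n-j}$ and applying Vandermonde's identity (Lemma \ref{lem:binomial identity}) with $\alpha = \beta = \gamma = n$ yields the central binomial coefficient $\sum_{j=0}^n \binom{n}{j}^2 = \binom{2n}{n}$. For the weighted sum I would use the absorption identity $j\binom{n}{j} = n\binom{n-1}{j-1}$ to pull out a factor of $n$, reducing it to $n\sum_{j=1}^n \binom{n-1}{j-1}\binom{n}{j}$. Reindexing and applying Vandermonde's identity once more (now with $\alpha = n-1$ and $\beta = n$) collapses the remaining sum to $\binom{2n-1}{n-1}$, so $\sum_{j=0}^n j\binom{n}{j}^2 = n\binom{2n-1}{n-1}$. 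Finally, the symmetry $\binom{2n-1}{n} = \binom{2n-1}{n-1}$ together with Pascal's rule $\binom{2n}{n} = \binom{2n-1}{n} + \binom{2n-1}{n-1}$ gives $\binom{2n-1}{n-1} = \tfrac12\binom{2n}{n}$, so this piece equals $\tfrac{n}{2}\binom{2n}{n}$.

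Adding the two contributions yields $g_2(n+1) = \tfrac{n}{2}\binom{2n}{n} + \binom{2n}{n} = \left(\tfrac{n}{2}+1\right)\binom{2n}{n}$, as claimed. There is no genuine obstacle here: the whole computation reduces to two applications of Vandermonde's identity plus the standard absorption identity, so the only thing to watch is bookkeeping on the index shifts, making sure the $j=0$ term drops harmlessly from the weighted sum and that the reindexing in the second Vandermonde step lines up the upper arguments correctly. As a sanity check I would verify the case $n=1$, where the formula predicts $g_2(2) = \tfrac32\binom{2}{1} = 3$, matching the direct count $1\cdot t_2(1,2) + 2\cdot t_2(2,2) = 1 + 2 = 3$.
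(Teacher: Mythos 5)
Your proposal is correct and coincides with the paper's second proof of this lemma: both reduce $g_2(n+1)$ to $\sum_{j=0}^n (j+1)\binom{n}{j}^2$ and split it into the two sums $\sum_j \binom{n}{j}^2 = \binom{2n}{n}$ and $\sum_j j\binom{n}{j}^2 = \tfrac{n}{2}\binom{2n}{n}$. The only difference is that you derive these two identities from Vandermonde's identity and absorption, whereas the paper simply cites them from \cite{CCGJMSY}; your index bookkeeping and the $n=1$ check are all correct.
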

We prove the lemma using two different methods, in anticipation that perhaps one might be more useful to future researchers trying to generalize to higher dimensions.

\begin{proof}[First Proof]
Let $\mu_2\left(n+1\right)$ be the mean for the number of steps of all simple jump paths from $(n+1,n+1)$ to $(0,0)$, and $s_2(n+1)$ be the total number of simple paths from $(n+1,n+1)$ to $(0,0)$. By Lemma $3.1$ in \cite{CCGJMSY},
\begin{equation}
\mu_2\left(n+1\right) \ = \ \frac{n}{2}+1,
\end{equation}
and
\begin{equation}
s_2(n+1) \ = \ \binom{2n}{n};
\end{equation} it is here that we are using $d=2$, as it is only when $d\le 2$ that we have simple formulas for $s_d(n+1)$, though with a more involved analysis similar results shoudl be obtainable for all $d$.

Since in a simple jump path the number of steps is the same as the number of gap vectors,
\begin{equation}
g_2\left(n+1\right) \ = \ \left(\frac{n}{2}+1\right)\binom{2n}{n}.
\end{equation}
\end{proof}

\begin{proof}[Second Proof]
Since a simple jump path from $(n+1,n+1)$ to $(0,0)$ with $i$ points chosen in between (not counting $(n+1,n+1)$ and $(0,0)$) has $(i+1)$ gap vectors,
\begin{eqnarray}
g_2\left(n+1\right) & \ = \ & \sum_{i=0}^{n}(i+1)\binom{n}{i}\binom{n}{i}\nonumber\\
& \ = \ & \sum_{i=0}^{n}i\binom{n}{i}+\sum_{i=0}^{n}\binom{n}{i}\binom{n}{i}.
\end{eqnarray}
We use the following two standard binomial identities, see for example  \cite{CCGJMSY} (again we do not have these for general $d$):
\begin{eqnarray}
\sum_{i=0}^{n}\binom{n}{i}\binom{n}{i} &\ = \ & \binom{2n}{n} \nonumber\\
\sum_{i=0}^{n}i\binom{n}{i}\binom{n}{i} &\ = \ & \frac{n}{2}\binom{2n}{n}.
\end{eqnarray}
Hence
\begin{eqnarray}
g_2\left(n+1\right) & \ = \ & \frac{n}{2}\binom{2n}{n} + \binom{2n}{n}\nonumber\\
& \ = \ & \left(\frac{n}{2}+1\right)\binom{2n}{n}.
\end{eqnarray}
\end{proof}

Now we have all the tools to prove Theorem \ref{thm:intro-gap-vector-dist}.

\begin{proof}[Proof of Theorem \ref{thm:intro-gap-vector-dist}]
Let $P(v_1,v_2)$ denote the probability that a given gap vector is $(v_1,v_2)$. By definition,
\begin{eqnarray}\label{eqn:probabilityofagapequationdisplay}
P(v_1,v_2) & = & \frac{G\left(n+1;(v_1,v_2)\right)}{g_2\left(n+1\right)}.
\end{eqnarray}
Using Lemmas \ref{numberofagap} and \ref{lem:totalnumberofgaps} to simplify,
\begin{eqnarray}\label{eqn:probabilityofagapequationfactorial}
P(v_1,v_2) & = &  \frac{(2n-v_1-v_2-1)\binom{2n-v_1-v_2-2}{n-v_1-1}+2\binom{2n-v_1-v_2}{n-v_1}}{(\frac{n}{2}+1)\binom{2n}{n}} \nonumber\\ & = & \frac{\frac{(2n-v_1-v_2-1)!}{(n-v_1-1)!(n-v_2-1)!}+2\frac{(2n-v_1-v_2)}{(n-v_1)(n-v_2)}\frac{(2n-v_1-v_2-1)!}{(n-v_1-1)!(n-v_2-1)!}}{(\frac{n}{2}+1)
\frac{(2n)!}{n!n!}}\nonumber\\ & = &  \frac{\left(1+2\frac{(2n-v_1-v_2)}{(n-v_1)(n-v_2)}\right)\frac{(2n-v_1-v_2-1)!}{(n-v_1-1)!(n-v_2-1)!}}{(\frac{n}{2}+1)
\frac{(2n)!}{n!n!}}.
\end{eqnarray}
As $v_1$ and $v_2$ are fixed,
\begin{equation}
\lim_{n\rightarrow\infty} \frac{(2n-v_1-v_2)}{(n-v_1)(n-v_2)}\ =\ 0,
\end{equation}
and thus
\begin{equation}
\lim_{n\to\infty}P(v_1,v_2) \ =\ \lim_{n\rightarrow\infty}\frac{\frac{(2n-v_1-v_2-1)!}{(n-v_1-1)!(n-v_2-1)!}}{(\frac{n}{2}+1)\frac{(2n)!}{n!n!}}.
\end{equation}

For $u$ large, Stirling's approximation states that $m! \approx \sqrt{2\pi m}(\frac{m}{e})^{m}$. We approximate the factorials, and can safely drop the lower order error terms as we take the limit as $n\to\infty$ and obtain
\begin{eqnarray}
\lim_{n\to \infty}P(v_1,v_2) & \ =\ & \lim_{n\rightarrow\infty}\frac{\left(\frac{2n-v_1-v_2-1}{e}\right)^{2n-v_1-v_2-1}\left(\frac{n}{e}\right)^{2n}}{\left(\frac{n-v_1-1}{e}\right)^{n-v_1-1} \left(\frac{n-v_2-1}{e}\right)^{n-v_2-1} \left(\frac{n}{2}+1\right)\left(\frac{2n}{e}\right)^{2n}}\nonumber\\ & & \ \ \ \ \times\frac{(\sqrt{2\pi})^{3}\sqrt{2n-v_1-v_2-1}(\sqrt{n})^{2}}{(\sqrt{2\pi})^{3}\sqrt{n-v_1-1}\sqrt{n-v_2-1}\sqrt{2n}}.\label{eq:limit-of-prob}
\end{eqnarray}
Since
\begin{equation}
\lim_{n\rightarrow\infty}\frac{\sqrt{2n-v_1-v_2-1}(\sqrt{n})^{2}}{\sqrt{n-v_1-1}\sqrt{n-v_2-1}\sqrt{2n}}\ =\ 1,
\end{equation}
The right hand side of \eqref{eq:limit-of-prob} becomes
\begin{eqnarray}
\lim_{n\to\infty} P(v_1,v_2) &\ =\ & \lim_{n\rightarrow\infty} \frac{\left(\frac{2n-v_1-v_2-1}{e}\right)^{2n-v_1-v_2-1}\left(\frac{n}{e}\right)^{2n}}{\left(\frac{n-v_1-1}{e}\right)^{n-v_1-1}\left(\frac{n-v_2-1}{e}\right)^{n-v_2-1}\left(\frac{n}{2}+1\right)\left(\frac{2n}{e}\right)^{2n}}\nonumber\\ & = & \lim_{n\rightarrow\infty}\frac{e^{-1}(2n-v_1-v_2-1)^{2n-v_1-v_2-1}}{\left(\frac{n}{2}+1\right)2^{2n}(n-v_1-1)^{n-v_1-1}(n-v_2-1)^{n-v_2-1}}\nonumber\\ & = & \lim_{n\rightarrow\infty}\frac{2n-v_1-v_2-1}{\frac{n}{2}+1}\frac{e^{-1}}{2^{v_1+v_2+2}}\left(\frac{2n-v_1-v_2-1}{2n-2v_1-2}\right)^{n-v_1-1}\nonumber\\ & & \ \ \ \ \times\left(\frac{2n-v_1-v_2-1}{2n-2v_2-2}\right)^{n-v_2-1}\nonumber\\ & = &\lim_{n\rightarrow\infty}\frac{2n-v_1-v_2-1}{\frac{n}{2}+1}\frac{e^{-1}}{2^{v_1+v_2+2}}\left(1+
\frac{\frac{v_1-v_2+1}{2}}{n-v_1-1}\right)^{n-v_1-1}\nonumber\\ & & \ \ \ \ \times \left(1+\frac{\frac{v_2-v_1+1}{2}}{n-v_2-1}\right)^{n-v_2-1}.
\end{eqnarray}
As
\begin{equation}
\lim_{x\rightarrow\infty}\left(1+\frac{a}{x}\right)^{x} \ = \ e^{a},
\end{equation}
we find
\begin{eqnarray}
\lim_{n\to\infty}P(v_1,v_2) & \ =\ & \lim_{n\rightarrow\infty}\frac{2n-v_1-v_2-1}{\frac{n}{2}+1}\frac{e^{-1}}{2^{v_1+v_2+2}}e^{\frac{v_1-v_2+1}{2}}e^{\frac{v_2-v_1+1}{2}} \nonumber\\ & = &\lim_{n\rightarrow\infty} \frac{2n-v_1-v_2-1}{\frac{n}{2}+1}\frac{1}{2^{v_1+v_2+2}}.
\end{eqnarray}

Since
\begin{equation}
\lim_{n\to\infty}\frac{2n-v_1-v_2-1}{\frac{n}{2}+1}\ =\ 4,
\end{equation}
we obtain
\begin{eqnarray}
\lim_{n\to\infty}P(v_1,v_2) \ =\ 4\times\frac{1}{2^{v_1+v_2+2}}\ = \  \frac{1}{2^{v_1+v_2}},
\end{eqnarray}
which is clearly a bivariate geometric random variable.
\end{proof}

We have proved that the distribution of gap vectors converge to a geometric decay as the lattice size goes to infinity. We may now turn to an alternate definition of gaps, namely, the gap sum. Recall Definition \ref{defi:gap-sum}, which states that a gap sum is the sum of components of the corresponding gap vector. The proof for Theorem \ref{thm:intro-probabilityofagapdef2} follows immediately from Theorem \ref{thm:intro-gap-vector-dist}.

\begin{proof}[Proof of Theorem \ref{thm:intro-probabilityofagapdef2}]
For a fixed $n$, let $P(v)$ denote the probability that a given gap sum equals $v \ge 2$; note $P(0) = P(1) = 0$ as we must have both horizontal and vertical movement in a step, therefore the smallest possible gap sum is 2. By Theorem \ref{thm:intro-gap-vector-dist}, for each value $v$ of gap sum, all pairs $(v_1,v_2)$ with $v_1 + v_2 = v$ contribute equally. As $1 \le v_1, v_2 \le v-1$, there are $v-1$ such pairs (once $v_1$ is chosen then $v_2$ is determined), each pair occurring with probability $1/2^v$. Thus
\begin{equation}
\lim_{n\to\infty}P(v) \ = \ (v-1)\left(\frac{1}{2}\right)^{v},
\end{equation} completing the proof.
\end{proof}

We remark on the difficulty in generalizing the above argument to arbitrary $d$. The problem is Lemma \ref{lem:binomial identity}; we are not aware of an analogue when $d\geq 3$.

\section{Future Work and Concluding Remarks}\label{sec:futureworkandconcludingremarks}

We end with some problems and comments for future research.

\begin{enumerate}
\item Is there a way to generalize our analysis to the $d$-dimensional lattice?
\item Do nice limits for the distribution of gap distances exist as they do for gap vectors and gap sums?
\item Can we obtain similar results in $d$-dimensional compound path \cite{FJLLLMSS} with the three definitions of gaps we set forth in this paper?
\item Can we obtain similar results on the distribution of longest gap in both $d$-dimensional simple paths and compound paths?
\end{enumerate}

As \cite{CCGJMSY} was able to obtain Gaussian behavior for the distribution of summands for all $d$, there is reason to be optimistic that a more involved analysis is possible and we could obtain similar extensions for gaps. In that work, however, the simple closed form expressions that exist in two dimensions do not generalize, and combinatorial proofs and analysis are replaced by more involved techniques. We have thus chosen here to concentrate on $d \le 2$, as this is already enough to see new behavior (i.e., previous problems never looked at the distribution of vector valued quantities for gaps).

For the distribution of gap distances, we need to know not just what numbers are the sums of two squares, but how they are, as the probability of a gap vector $(v_1, v_2)$ is $1/2^{v_1+v_2}$. Thus for small $g$ we can easily compute all pairs $(v_1, v_2)$ with $v_1^2 + v_2^2 = g$ (it is easier to study the square of the gap distance, as that is always an integer). For large $g$ we would need advanced results from number theory on decompositions, but these values will have negligibly small probabilities of occurring.

As \cite{FJLLLMSS} extended the results from \cite{CCGJMSY} to compound paths through additional book-keeping (especially more involved inclusion-exclusion arguments), with more work it is likely that the gap results can be generalized to the compound setting as well.

Finally, we end with some results on the distribution of the longest gap. The one dimensional case is already known (see for example \cite{GSW,Sch}); it is essentially equivalent to the distribution of the longest run of heads when tossing a fair coin. If we toss $n$ fair coins, the expected value of the longest run of heads is \be  \log_{2} n + \frac{\gamma}{\log 2} - \frac32  + r_1(n) + \epsilon_1(n), \ee where $\gamma$ is Euler's constant,  $|r_1(n)| \le  0.000016$, and $\epsilon_1(n)$ tends to zero as $n$ tends to infinity. Moreover, the distribution is strongly concentrated about the mean; the variance is \be \frac{\pi^2}{6\log^2 2} + \frac{1}{12} + r_2(n) + \epsilon_2(n), \ee where $r_2(n) < 0.00006$ and $\epsilon_2(n)$ goes to zero as $n$ tends to infinity. Note the mean is approximately $\log_2 n$ and the variance is \emph{bounded} independent of $n$.

Interestingly, it is easier to get results in the compound setting, as the freedom to just move in one direction allows us to view the two components of the vectors as independent. In other words, if we wish to look at the length of the longest horizontal or vertical gap, it is essentially the same as in the case of tossing fair coins; the only possible difference is we must end with a `head', but at worst that increases the length by 1, which is negligible relative to $\log_2 n$. We can thus immediately get decent bounds on the approximate size of the longest gap in the compound case; the horizontal and vertical results give a lower bound, and adding the two (or adding the two and taking a square-root) provides an upper bound. More work is needed to get results for simple paths, because there now \emph{is} a dependence between the two motions (we must have the same number of `heads' for each), but with some work it is likely that one could obtain results that show $\log_2 n$ is the right order of magnitude for the longest gap (or at least that there is negligible probability of a longest gap of size $n^\delta$ for any fixed $\delta > 0$, and that the longest gap is of size $\log\log n$).


\medskip

\noindent MSC2010: 11B02 (primary), 05A02 (secondary)

\end{document}